\renewcommand{\d}{\mathbf{d}}
\newtheorem{theorem}{Theorem}[section]
\newtheorem{lemma}[theorem]{Lemma}
\newtheorem{corollary}[theorem]{Corollary}
\newtheorem{proposition}[theorem]{Proposition}
\title{The classifying space of an inverse semigroup}
\author{Ganna Kudryavtseva}
\address{Ganna  Kudryavtseva, University of Ljubljana,
Faculty of Computer and Information Science, 
Tr\v{z}a\v{s}ka cesta 25,
SI-1001, Ljubljana,
 SLOVENIA}
\email{ganna.kudryavtseva\symbol{64}fri.uni-lj.si}
\author{Mark V. Lawson}
\address{Mark V.Lawson, Department of Mathematics
and the
Maxwell Institute for Mathematical Sciences, 
Heriot-Watt University,
Riccarton,
Edinburgh~EH14~4AS,
SCOTLAND}
\email{markl@ma.hw.ac.uk}
\thanks{The first author was partially supported by the ARRS grant P1-0288, and the second was partially supported by the EPSRC grant EP/I033203/1}
\begin{document}

\begin{abstract}
We refine Funk's description of the classifying space of an inverse semigroup by replacing his $\ast$-semigroups
by right generalized inverse $\ast$-semigroups.
Our proof uses the idea that presheaves of sets over meet semilattices may be characterized algebraically as right normal bands.
\end{abstract}

\maketitle
\section{Statement of the theorem}

With each inverse semigroup $S$, we shall associate two categories,
the aim of this paper being to prove that these two categories are equivalent.

To define the first, we need the concept of an \'etale action of an inverse semigroup.
These were first explicitly defined in \cite{FS}, but their origins lie in  \cite{Lausch,Log}
and they played an important role in \cite{LS}.
Let $X$ be a non-empty set. 
A {\em left $S$-action} of $S$ on $X$ is a function $S\times X\to X$, defined by $(s,x)\mapsto s\cdot x$ (or $sx$),
such that $(st)x=s(tx)$ for all $s,t\in S$ and $x\in X$.   
If $S$ acts on $X$ we say that $X$ is an {\em $S$-set}.
In this paper, all actions will be assumed left actions.
A {\em left \'{e}tale action} $(S,X,p)$ of $S$ on $X$ is defined as follows \cite{FS,S}. 
Let $E(S)$ denote the semilattice of idempotents of $S$. There is a function $p:X\to E(S)$ 
and a left action $S\times X\to X$ such that the following two conditions hold:
\begin{enumerate}[(E1)]
\item $p(x) \cdot x=x$;
\item $p(s \cdot x)=sp(x)s^{-1}$.
\end{enumerate}
The set $X$ is also partially ordered when we define $x \leq y$ when there exists $e \in E(S)$ such that
$x = e \cdot y$.
A {\em morphism} $\varphi \colon (S,X,p)\to (S,Y,q)$ of left \'{e}tale  actions is a map $\varphi \colon X\to Y$ such that $q(\varphi(x))=p(x)$ 
for any $x\in X$ and $\varphi(s\cdot x)=s\cdot \varphi(x)$ for any $s\in S$ and $x\in X$.
The category of all left \'etale $S$-actions is called the {\em classifying space} or {\em classifying topos} of $S$ and is denoted by $\mathscr{B}(S)$.
This space is the subject of Funk's paper \cite{F}.

In the last section, we shall need a more general notion of morphism.
Let $(S,X,p)$ and $(T,Y,q)$ be \'etale actions where we do not assume that $S$ and $T$ are the same.
Then $(\alpha,\beta) \colon (S,X,p) \rightarrow (T,Y,q)$ is called a {\em morphism}
if $\alpha \colon S \rightarrow T$ is a semigroup homomorphism,
$\beta \colon X \rightarrow Y$ is a function such that $q(\beta (x)) = p(x)$,
and $\beta (s \cdot x) = \alpha (s) \cdot \beta (x)$.

To define our second category, we need some definitions from semigroup theory.
An element $s$ of a semigroup $S$ is said to be {\em (von Neumann) regular} if there is an element $t$,
called an {\em inverse} of $s$, such that $s = sts$ and $t = tst$.
The set of inverses of the element $s$ is denoted by $V(s)$.
In an inverse semigroup $S$, we write the unique inverse of $s$ as $s^{-1}$ and we define
$\mathbf{d}(s) = s^{-1}s$ and $\mathbf{r}(s) = ss^{-1}$. 
A {\em band} is a semigroup in which every element is idempotent
and a {\em right normal band} is a band satisfying the identity $efg = feg$.
A {\em right generalized inverse semigroup} is a regular semigroup whose set of idempotents is a right normal band.
On a regular semigroup $S$, we may define a relation $a \leq b$ if and only if $a = eb = bf$ for some idempotents $e$ and $f$.
This is a partial order called the {\em natural partial order}.
The order need not be compatible with the multiplication but it is precisely when the semigroup $S$
is {\em locally inverse} meaning that each local submonoid, $eSe$, where $e$ is an idempotent, is inverse.
If $S$ is a band, the order is the usual order on idempotents: $e \leq f$ if and only if $e = ef = fe$.
For right generalized inverse semigroups in general, and right normal bands in particular,
this partial order is compatible with the multiplication.

The following definition is taken from \cite{F}, except for (S4) which is new.
A semigroup $S$ is said to be a {\em right $\ast$-semigroup} if it is equipped with a unary operation $s \mapsto s^{\ast}$
satisfying the following axioms:
\begin{enumerate}[({S}1)]
\item\label{s1} $(s^{\ast})^{\ast} = s$. 
\item\label{s2} $s^{\ast} \in V(s)$.
\item\label{s3} $(st)^{\ast} = t^{\ast}(stt^{\ast})^{\ast}$.
\item\label{s4} If $e^{2} = e$ then $e^{\ast} = e$.
\end{enumerate}
Left $\ast$-semigroups can be defined in an analogous way.
In this paper, we shall only be interested in right $\ast$-semigroups and so we shall omit the word
{\em right} in what follows.
Clearly, these semigroups are regular.
Inverse semigroups are special examples where the $\ast$ is just inversion.
We shall be interested in {\em right generalized inverse} $\ast$-semigroups.
{\em Homomorphisms} of $\ast$-semigroups are defined in the obvious way and will sometimes be called {\em $\ast$-homomorphisms}.
A semigroup homomorphism from a $\ast$-semigroup to an inverse semigroup automatically preserves the $\ast$ operation.

It is worth mentioning that axioms (S2) and (S3) arise in a completely different context in the work of Tom Blyth \cite{B1,B2}.

Let $T$ be a right generalized inverse $\ast$-semigroup and $S$ an inverse semigroup.
A semigroup homomorphism $\theta \colon T \rightarrow S$ is said to be {\em \'etale} \cite{F}
if for each $e \in E(T)$ the restriction map $(\theta \mid Te) \colon Te \rightarrow S \theta (e)$ is a bijection.
We denote by $\mbox{Et}/S$ the category of right generalized inverse $\ast$-semigroups \'etale over $S$;
the objects of this category are \'etale homomorphisms $\phi \colon T \rightarrow S$,
and a morphism from $\phi_{1}$ to $\phi_{2}$ is a homomorphism $\theta \colon T_{1} \rightarrow T_{2}$
satisfying $\phi_{1} \theta = \phi_{2}$.
The theorem we shall prove is the following.

\begin{theorem}\label{th: classifying_space} Let $S$ be an inverse semigroup.
Then the classifying space of $S$ is equivalent to the category of right generalized inverse $\ast$-semigroups \'etale over $S$.
\end{theorem}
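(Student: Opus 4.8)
The plan is to construct a pair of functors $F \colon \mathrm{Et}/S \to \mathscr{B}(S)$ and $G \colon \mathscr{B}(S) \to \mathrm{Et}/S$ and to exhibit natural isomorphisms $FG \cong \mathrm{id}$ and $GF \cong \mathrm{id}$. The conceptual engine, as the abstract signals, is the dictionary between presheaves of sets over the meet-semilattice $E(S)$ and right normal bands: an \'etale action $(S,X,p)$ restricts, via $x \mapsto e\cdot x$ for $e \le p(x)$, to a presheaf over $E(S)$, and this presheaf is exactly the right normal band that will serve as the idempotents of the $\ast$-semigroup we build.

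For the functor $G$, given an \'etale action $(S,X,p)$ I would set
\[
T \;=\; \{(s,x) \in S\times X : p(x) = \mathbf{r}(s)\},
\]
with multiplication $(s,x)(t,y) = (st,\, s\cdot y)$, involution $(s,x)^{\ast} = (s^{-1},\, s^{-1}\cdot x)$, and structure map $\phi(s,x) = s$. One checks that $(st, s\cdot y)$ again lies in $T$ because $p(s\cdot y) = s p(y) s^{-1} = \mathbf{r}(st)$, that the multiplication is associative by the action axiom $(st)\cdot w = s\cdot(t\cdot w)$, and that $\phi$ is a homomorphism. The idempotents are precisely the pairs $(p(x),x)$, so $E(T)\cong X$ as sets, and the induced product of idempotents is the restriction product $(p(x),x)(p(y),y) = (p(x)p(y),\, p(x)\cdot y)$ of the associated presheaf; this product is right normal because both $efg$ and $feg$ equal the common restriction of $g$, so $T$ is a right generalized inverse $\ast$-semigroup once (S1)--(S4) are verified (S4 and regularity are immediate from $e^{\ast}=e$ and (S2), while (S3) is a short computation). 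Finally $\phi$ is \'etale: for $\epsilon = (p(x_{0}),x_{0})$ one computes $T\epsilon = \{(u, u\cdot x_{0}) : u \in S\phi(\epsilon)\}$, on which $\phi$ is a bijection onto $S\phi(\epsilon)$.

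For the functor $F$, given an \'etale homomorphism $\phi \colon T \to S$ I would take $X = E(T)$, $p = \phi\!\mid\! E(T)$, and define the action by transport along the \'etale bijections: for $s\in S$ and $e\in E(T)$, since $s\phi(e) \in S\phi(e)$ there is a unique $t\in Te$ with $\phi(t) = s\phi(e)$, and I set $s\cdot e = \mathbf{r}(t) = tt^{\ast}$. Axioms (E1) and (E2) follow directly ($p(s\cdot e) = \phi(t)\phi(t)^{-1} = s\phi(e)s^{-1}$), and the two functors are mutually inverse up to natural isomorphism. The composite $FG$ returns the action through the identification $(p(x),x)\leftrightarrow x$, under which $s\cdot(p(x),x)$ is carried to $(sp(x))\cdot x = s\cdot x$; and for $GF$ the map $\Psi \colon T \to T'$, $\tau \mapsto (\phi(\tau), \mathbf{r}(\tau))$, is the required $\ast$-isomorphism over $S$. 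Its bijectivity is where the \'etale hypothesis does the real work: if $\phi(\tau_{1}) = \phi(\tau_{2})$ and $\mathbf{r}(\tau_{1}) = \mathbf{r}(\tau_{2}) = e$, then $\tau_{1}^{\ast}, \tau_{2}^{\ast} \in Te$ have equal image $\phi(\tau_{i})^{-1}\in S\phi(e)$, so injectivity of $\phi\!\mid\! Te$ forces $\tau_{1} = \tau_{2}$; surjectivity is the dual argument, using injectivity of $\phi$ on $\{f \le e\}$ to upgrade $\mathbf{d}(\sigma)\le e$ to $\mathbf{d}(\sigma)=e$.

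I expect the main obstacle to be checking that $\Psi$ (equivalently, the action recovered by $F$) respects multiplication: one must show $\mathbf{r}(\tau_{1}\tau_{2}) = \phi(\tau_{1})\cdot\mathbf{r}(\tau_{2})$. Here (S3) is used to reduce the left-hand side to $\mathbf{r}\bigl(\tau_{1}\,\mathbf{r}(\tau_{2})\bigr)$, after which one identifies $\tau_{1}\,\mathbf{r}(\tau_{2})$ as the unique element of $T\,\mathbf{r}(\tau_{2})$ lying over $\phi(\tau_{1})\phi(\mathbf{r}(\tau_{2}))$; it is precisely at this reduction, and in ensuring that the band product matches under $\Psi$, that right-normality rather than mere regularity is indispensable, since the natural partial order must interact correctly with $\ast$. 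The remaining verifications --- functoriality on morphisms (sending $\theta \colon T_{1}\to T_{2}$ to its restriction to idempotents, and $\varphi \colon X\to Y$ to $(s,x)\mapsto(s,\varphi(x))$) and naturality of the two isomorphisms --- are routine once the object-level correspondence and the homomorphism property above are in place.
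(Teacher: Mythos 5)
Your proposal is correct and follows essentially the same route as the paper: the same two functors (pairs $(s,x)$ with étale projection in one direction, the transported action on $E(T)$ via the unique $t \in Te$ over $s\theta(e)$ in the other), with the same mutual-inverse verification, including the key homomorphism step $\mathbf{r}(\tau_1\tau_2)=\mathbf{r}(\tau_1\mathbf{r}(\tau_2))$ via (S3). The only difference is cosmetic: you anchor the second coordinate at the range ($p(x)=\mathbf{r}(s)$, with $(s,x)(t,y)=(st,s\cdot y)$ and $(s,x)^{\ast}=(s^{-1},s^{-1}\cdot x)$) where the paper uses the domain ($\mathbf{d}(s)=p(x)$), and the map $(s,x)\mapsto(s,s\cdot x)$ is a $\ast$-isomorphism over $S$ between the two constructions.
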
 

The key idea lying behind the work of this paper can be traced back to Wagner \cite{W}
and it is that presheaves of sets over meet semilattices can be regarded as right normal bands.
This idea is explored in more detail in \cite{KL}.
For results on general semigroup theory see \cite{H} and for inverse semigroups \cite{L}.

\section{Proof of the theorem}

A regular semigroup is {\em orthodox} if its set of idempotents forms a band.
On an orthodox semigroup $S$, the relation $\gamma$ defined by
$$s \, \gamma \, t \Leftrightarrow V(s) \cap V(t) \neq \emptyset \Leftrightarrow V(s) = V(t)$$ 
is the minimum inverse congruence.
As usual, we denote Green's relations on any semigroup by $\mathscr{L},\mathscr{R},\mathscr{H},\mathscr{D}$ and $\mathscr{J}$.
The $\mathscr{L}$-class containing the element $a$ is traditionally denoted $L_{a}$.
Right generalized inverse semigroups have a right normal band of idempotents.
We may deduce from this that in such a semigroup $efa = fea$ for any idempotents $e$ and $f$ and any element $a$.

An important property of right generalized inverse semigroups is described below.
It is the beginning of the process of characterizing \'etale maps.

\begin{lemma}\label{le: property} Let $S$ be a right generalized inverse semigroup.
\begin{enumerate}

\item If $a,b \in Se$, where $e$ is an idempotent, and $\gamma (a) = \gamma (b)$ then $a = b$.

\item If $a \, \gamma \, a^{2}$ then $a = a^{2}$. Thus $\gamma$ is {\em idempotent pure}.

\item Let $\gamma (a)\gamma (e) = \gamma (a)$, where $\gamma (e)$ is an idempotent.
Then there exists $b \in Se$ such that $\gamma (b) = \gamma (a)$.

\item The natural homomorphism $S \rightarrow S/\gamma$ is \'etale when $S$ is a right generalized inverse $\ast$-semigroup.

\item Let $\ast$ be a unary operation on $S$ that satisfies (S1), (S2) and (S3).
Then (S4) holds.

\end{enumerate}

\end{lemma}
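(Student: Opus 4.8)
The plan is to prove part~(1) first, since parts~(2)--(4) then follow from it very quickly, and to treat part~(5) separately by a short computation with the $\ast$-axioms combined with the idempotent purity established in~(2). Throughout I use the standing hypothesis that $E(S)$ is a right normal band, in the strengthened form $efa=fea$ for idempotents $e,f$ and arbitrary $a$, which is recorded just before the lemma.

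For part~(1), suppose $\gamma(a)=\gamma(b)$ with $a,b\in Se$ and fix a common inverse $a'\in V(a)=V(b)$, available from the description of $\gamma$ on the orthodox semigroup $S$. I would first note that $aa'$, $ba'$, $a'a$, $a'b$ are idempotents and that $a'aa'=a'=a'ba'$. A short manipulation using only the law $efa=fea$ together with these relations should yield the two identities $aa'b=b$ and $ba'a=a$. Writing $u=a'a$ and $v=a'b$, these rewrite as $v=uv$ and $u=vu$, i.e. the idempotents $u$ and $v$ are $\R$-related. The hypothesis $a,b\in Se$ enters only now: from $ae=a$ and $be=b$ we get $ue=u$ and $ve=v$, and feeding $u=vu$, $v=uv$ into $efa=fea$ collapses $ue$ and $ve$ to a common value, so that $u=ue=ve=v$, i.e. $a'a=a'b$; then $a=ba'a=ba'b=b$. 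The crux of the whole lemma, and the step I expect to demand the most care, is precisely this passage in~(1) from the $\R$-relation between $a'a$ and $a'b$ to their equality, where the hypothesis $a,b\in Se$ and the right normal law have to be combined just so; the earlier identities $aa'b=b$ and $ba'a=a$ hold in any right generalized inverse semigroup and by themselves do \emph{not} force $a=b$.

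Parts~(2)--(4) are then light. For~(2), given $a\,\gamma\,a^{2}$ I would choose $a'\in V(a)$ and set $e=a'a$; one checks $ae=a$ and $a^{2}e=a^{2}$, so $a,a^{2}\in Se$ and part~(1) gives $a=a^{2}$. Idempotent purity is just the reformulation that $\gamma(a)\in E(S/\gamma)$ forces $a\in E(S)$. For~(3) I would simply take $b=ae$: it lies in $Se$, and $\gamma(b)=\gamma(a)\gamma(e)=\gamma(a)$ by hypothesis. For~(4), étaleness of the natural map $S\to S/\gamma$ at an idempotent $e$ asks that $\gamma\colon Se\to (S/\gamma)\gamma(e)$ be a bijection; injectivity is exactly part~(1), while surjectivity is immediate since every element of $(S/\gamma)\gamma(e)$ has the form $\gamma(se)$ with $se\in Se$, and $S/\gamma$ is inverse because $\gamma$ is the minimum inverse congruence. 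The $\ast$-hypothesis here only serves to place $S$ as an object of the relevant category; the bijection itself uses nothing beyond $S$ being right generalized inverse.

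Finally, part~(5) is where the $\ast$-structure is genuinely needed. The key subsidiary identity is $(ss^{\ast})^{\ast}=ss^{\ast}$, obtained by applying (S3) with $t=s^{\ast}$ and simplifying through (S1) and $ss^{\ast}s=s$ from (S2). Now let $e$ be idempotent. By (S2), $e^{\ast}\in V(e)$, and since $e$ is a common inverse of $e$ and $e^{\ast}$ we have $e\,\gamma\,e^{\ast}$; as $\gamma(e)$ is idempotent, idempotent purity from~(2) gives $e^{\ast}\in E(S)$. Being mutually inverse idempotents, $e$ and $e^{\ast}$ are $\D$-related and hence lie in a common rectangular component of the band $E(S)$, which is a right zero semigroup because right normality makes $\mathscr{L}$ trivial; therefore $ee^{\ast}=e^{\ast}$. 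Substituting this into the identity and using (S1) yields $e=(e^{\ast})^{\ast}=(ee^{\ast})^{\ast}=ee^{\ast}=e^{\ast}$, which is (S4).
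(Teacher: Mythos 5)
Your proposal is correct, but in the two substantive parts, (1) and (5), it takes a genuinely different route from the paper's. For (1), the paper first shows $a'a \leq e$ and $a'b \leq e$ and then invokes the fact that right generalized inverse semigroups are \emph{locally inverse}, so that these two idempotents commute inside the inverse monoid $eSe$; a short computation then gives $a'b = a'a$ and hence $a = b$. You avoid local inverseness altogether. Your absorption identities do hold in any right generalized inverse semigroup: the ``short manipulation'' you postpone is $aa' \cdot ba' = a(a'ba') = aa'$ and $ba' \cdot aa' = b(a'aa') = ba'$, so $aa'$ and $ba'$ are $\mathscr{L}$-related idempotents and hence equal because right normality makes $\mathscr{L}$ trivial, whence $aa'b = ba'b = b$ and $ba'a = aa'a = a$. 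Your collapse of the $\mathscr{R}$-related idempotents $a'a$ and $a'b$ to a common value using $a,b \in Se$ and the law $efa = fea$ is valid as written. This buys a more self-contained argument resting only on right normality, at the cost of one asserted-but-unproved step; the paper's route is shorter but leans on the cited, nontrivial fact of local inverseness. Parts (2)--(4) coincide with the paper's treatment (the paper gets surjectivity in (4) from (3), you argue it directly; immaterial). In (5), the paper derives $e^{\ast}e = e$ from $e \, \gamma \, e^{\ast}$ and right normality and then applies (S3) to $(e^{\ast}e)^{\ast}$, whereas you prove the auxiliary identity $(ss^{\ast})^{\ast} = ss^{\ast}$ and combine it with $ee^{\ast} = e^{\ast}$ via your rectangular-component (right zero) argument; both versions rest on the same two pillars, namely idempotent purity from (2) and triviality of $\mathscr{L}$ in a right normal band, and both are correct.
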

\begin{proof} (1). Let $a' \in V(a)$.
Then from $ae = a$ we get that $a'ae = a'a$.
It follows that $ea'a \leq e$ and $ea'a \, \mathscr{L} \, a'a$.
But in a right normal band the $\mathscr{L}$-relation is equality.
It follows that $a'a = ea'a$ and so $a'a \leq e$.
If $b' \in V(b)$, we may similarly deduce that $b'b \leq e$.
Right generalized inverse semigroups are locally inverse and so $a'ab'b = b'ba'a$.
We now use the fact that $V(a) = V(b)$.
We therefore have 
$$a'b = a'aa' \cdot b = a'a \cdot a'b = a'b \cdot a'a = a'ba' \cdot a = a'a.$$
It follows that
$$a = a \cdot a'a \cdot a'a = a \cdot a'b \cdot a'b = aa' \cdot ba' \cdot b = ba' \cdot aa' \cdot b = ba' \cdot ba' \cdot b = b$$
as required.

(2). The elements $a$ and $a^{2}$ have the same image under $\gamma$ and $a,a^{2} \in Sa'a$ where $a' \in V(a)$.
It follows by (1) that $a = a^{2}$.

(3). By (2) above we know that $e$ is an idempotent. It is therefore enough simply to define $b = ae$.

(4). This is immediate by (1) and (3) above and the definition of \'etale.

(5). From (S1), (S2) and the fact that $\gamma$ is a homomorphism, it follows that $\gamma (s^{\ast}) = \gamma (s)^{-1}$
for any $s \in S$.
Let $e \in E(S)$.
Then $\gamma (e^{\ast}) = \gamma (e)$.
It follows that $e^{\ast}$ is an idempotent since $\gamma$ is idempotent pure by (2) above.
Since $e \, \gamma \, e^{\ast}$ and we are in a right normal band, we have that $e^{\ast}e = e$.
Applying (S3), we obtain
$$e^{\ast} = (e^{\ast}e)^{\ast} = e^{\ast}(e^{\ast}ee^{\ast})^{\ast} = e^{\ast}e = e.$$
It follows that (S4) holds.
\end{proof}

We shall now describe the form taken by the natural partial order on a regular semigroup in the case important to us.

\begin{lemma}\label{le: order} Let $S$ be a right generalized inverse $\ast$-semigroup.
\begin{enumerate}

\item $a \leq b$ if and only if $a^{\ast} \leq b^{\ast}$.

\item $a \leq b$ if and only if $a = aa^{\ast}b$ if and only if $a^{\ast} = a^{\ast}ab^{\ast}$.

\end{enumerate}
\end{lemma}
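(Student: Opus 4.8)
The plan is to reduce the whole lemma to two statements about the single one-sided identity $a = aa^{\ast}b$. The first is the equivalence $a \leq b \iff a = aa^{\ast}b$, which I will call $(\dagger)$; this is already the first biconditional of part (2). The second is the involution-compatibility $a = aa^{\ast}b \iff a^{\ast} = a^{\ast}ab^{\ast}$, which I will call $(\ddagger)$; this is simultaneously the second biconditional of part (2) and, as I explain below, the substance of part (1). Once $(\dagger)$ and $(\ddagger)$ are in hand, part (1) is formal: chaining them gives $a \leq b \iff a^{\ast} = a^{\ast}ab^{\ast}$, and applying $(\dagger)$ to the pair $(a^{\ast},b^{\ast})$, together with $(a^{\ast})^{\ast} = a$ from (S1), rewrites the right-hand side as $a^{\ast} \leq b^{\ast}$.

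To prove $(\dagger)$ I first record that $aa^{\ast}$ and $a^{\ast}a$ are idempotents, by (S2). For the implication $a = aa^{\ast}b \Rightarrow a \leq b$, the factorisation $a = (aa^{\ast})b$ supplies the left-hand idempotent immediately, so the only issue is to find an idempotent $f$ with $a = bf$. Here I would pass to the inverse semigroup $S/\gamma$: since $\gamma(a^{\ast}) = \gamma(a)^{-1}$, the hypothesis gives $\gamma(a) = \gamma(a)\gamma(a)^{-1}\gamma(b)$, so $\gamma(a) \leq \gamma(b)$ in $S/\gamma$ and hence $\gamma(a) = \gamma(b\,a^{\ast}a)$. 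As $a$ and $b\,a^{\ast}a$ both lie in $S(a^{\ast}a)$, Lemma~\ref{le: property}(1) promotes this to the honest equality $a = b(a^{\ast}a)$, giving $f = a^{\ast}a$. For the reverse implication $a \leq b \Rightarrow a = aa^{\ast}b$ I would instead compute directly from $a = eb = bf$: applying (S3) and then (S4) to the idempotent $ebb^{\ast}$ gives $a^{\ast} = b^{\ast}ebb^{\ast}$, and substituting this into $aa^{\ast}b$ and using the right normal band identity $e(bb^{\ast})e = bb^{\ast}e$ collapses $aa^{\ast}b$ to $bb^{\ast}a$, which equals $a$ because $a = bf$.

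The real work is $(\ddagger)$, and by (S1) it is enough to prove the forward implication $a = aa^{\ast}b \Rightarrow a^{\ast} = a^{\ast}ab^{\ast}$; the converse then follows by substituting $(a^{\ast},b^{\ast})$ for $(a,b)$. From the hypothesis I would extract two identities. Multiplying on the left by $a^{\ast}$ and using (S2) gives $a^{\ast}a = a^{\ast}b$. Applying $\ast$ to $a = (aa^{\ast})b$ and then (S3) and (S4), now to the idempotent $aa^{\ast}bb^{\ast}$, gives $a^{\ast} = b^{\ast}aa^{\ast}bb^{\ast}$; right-multiplying this by $bb^{\ast}$ and cancelling gives $a^{\ast}bb^{\ast} = a^{\ast}$. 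Combining the two, $a^{\ast}ab^{\ast} = a^{\ast}bb^{\ast} = a^{\ast}$, which is exactly $(\ddagger)$. I expect the main obstacle to be the bookkeeping in these $\ast$-computations: axiom (S3) rewrites $(st)^{\ast}$ only in the one specific form $t^{\ast}(stt^{\ast})^{\ast}$, so each application must be set up so that the inner factor $stt^{\ast}$ is an idempotent and (S4) can strip the outer $\ast$, and the right normal band identity must be deployed at just the right moments to commute idempotent prefixes so that the remaining products cancel. Arranging the factorisations so that every appeal to (S3) lands on such an idempotent is the crux of the argument.
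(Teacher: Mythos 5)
Your proposal is correct --- every step checks out --- but its logical architecture is genuinely different from the paper's, and the comparison is instructive. Both arguments hinge on the same mechanism for the one step that cannot be done by pure $\ast$-algebra: to pass from $a = aa^{\ast}b$ to the two-sided factorization $a = b(a^{\ast}a)$, you (exactly like the paper) descend to the inverse quotient $S/\gamma$, note that $\gamma(a) = \gamma(ba^{\ast}a)$, and lift the equality back using Lemma~\ref{le: property}(1). Where you diverge is in the direction of dependency between the two parts. The paper proves part (1) first, on its own, using (S3), (S4) and a \emph{second} application of the $\gamma$-plus-Lemma~\ref{le: property}(1) device (to establish $a^{\ast} = b^{\ast}bfb^{\ast}$), and then obtains the second biconditional of part (2) ``immediately'' from part (1) and the first biconditional. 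You invert this: you prove the second biconditional of (2) directly, by the purely equational computation $a^{\ast} = b^{\ast}aa^{\ast}bb^{\ast}$ (from (S3)/(S4) applied to the idempotent $aa^{\ast}bb^{\ast}$) combined with $a^{\ast}a = a^{\ast}b$, and then part (1) falls out formally by applying $(\dagger)$ to the pair $(a^{\ast},b^{\ast})$ and invoking (S1). The net effect is that your proof confines the $\gamma$-machinery to the single implication where it is unavoidable and uses it only once, where the paper uses it twice; the price is one extra $\ast$-computation that the paper never needs. The two routes are of comparable length, but yours makes explicit that part (1) is a purely formal consequence of part (2) together with the involution axiom (S1), which the paper's ordering obscures.
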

\begin{proof} (1). Let $a \leq b$. 
By definition, we have that $a = eb = bf$ for some idempotents $e$ and $f$.
By (S3), we have that $a^{\ast} = (eb)^{\ast} = b^{\ast}(ebb^{\ast})^{\ast}$.
But $ebb^{\ast}$ is an idempotent, and so by (S4) we have that $a^{\ast} = b^{\ast}(ebb^{\ast}) = b^{\ast}e'$ where $e'$ is an idempotent.
We shall now prove that $a^{\ast} = b^{\ast}bfb^{\ast}$.
This follows from Lemma~\ref{le: property}
using the fact that $a^{\ast} \in Sbb^{\ast}$ and $\gamma (b^{\ast}bfb^{\ast}) = \gamma (a)^{-1}$.
The proof of the converse is immediate.

(2) Suppose that $a \leq b$. 
From $a \leq b$ we have that $a = eb = bf$ for some idempotents $e$ and $f$.
But then $ea = a$ and so $eaa^{\ast} = aa^{\ast}$.
It follows that $a = aa^{\ast}eb = eaa^{\ast}b = aa^{\ast}b$ using the fact that the idempotents of $S$ form a right normal band.
Conversely, suppose that $a = aa^{\ast}b$.
Then $\gamma (a) \leq \gamma (b)$ and so $\gamma (a) = \gamma (ba^{\ast}a)$.
But $a,ba^{\ast}a \in Sa^{\ast}a$.
It follows by Lemma~\ref{le: property} that $a = ba^{\ast}a$ and so $a \leq b$, as required.
The proof of the other equivalence is now immediate by this result and (1).
\end{proof}

It follows that the natural partial order on a right generalized inverse $\ast$-semigroup
coincides with the order studied in \cite{F}.

Let $\theta \colon S \rightarrow T$ be a surjective homomorphism of regular semigroups.
We say that it is an {\em $\mathscr{L}$-cover} if for each idempotent $e \in S$ the map
$(\theta \mid L_{e}) \colon L_{e} \rightarrow L_{\theta (e)}$ is bijective.
We could prove some of the results that follow in greater generality.

\begin{proposition}\label{prop: co-ordinatization} Let $S$ be a right generalized inverse semigroup.
\begin{enumerate}

\item The natural map $S \rightarrow S/\gamma$ is an {\em $\mathscr{L}$-cover}.

\item There is a bijection between $S$ and the subset of $S/\gamma \times E(S)$ consisting of those pairs
$(\gamma (s),e)$ where $s's \, \gamma \,  e$ and $s' \in V(s)$.

\end{enumerate}
\end{proposition}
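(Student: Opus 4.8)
The plan is to treat part (1) as the substantial statement and to recover part (2) as a bookkeeping consequence of it. Writing $\theta$ for the natural map $S \to S/\gamma$ (recall $S/\gamma$ is inverse, since a right generalized inverse semigroup is orthodox), I would prove separately that for each idempotent $e$ the restriction $(\theta \mid L_{e}) \colon L_{e} \to L_{\theta (e)}$ is injective and surjective. The guiding observation throughout is that membership in an $\mathscr{L}$-class over an idempotent forces membership in a principal left ideal: if $a \, \mathscr{L} \, e$ then $ae = a$, so $a \in Se$, which is exactly the hypothesis required to invoke Lemma~\ref{le: property}(1).

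Injectivity is then immediate: if $a,b \in L_{e}$ satisfy $\gamma (a) = \gamma (b)$, both lie in $Se$ and Lemma~\ref{le: property}(1) gives $a = b$. For surjectivity I would start from $\gamma (a) \in L_{\theta (e)}$, so that $\mathbf{d}(\gamma (a)) = \gamma (e)$ in $S/\gamma$, and set $b = ae$. A short computation gives $\gamma (b) = \gamma (a)\gamma (e) = \gamma (a)$ and $be = b$, so $b \in Se$; this is essentially Lemma~\ref{le: property}(3). The genuinely delicate point is to promote $b \in Se$ to $b \in L_{e}$, i.e.\ to check that $\gamma (a)$ is hit from inside the correct $\mathscr{L}$-class, and I expect this to be the main obstacle, as it is the one step that uses the right-zero structure of the $\mathscr{D}$-classes of the band rather than Lemma~\ref{le: property} as a black box.

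To carry out that step I would compare the idempotents $b'b$ (for $b' \in V(b)$) and $e$. On the one hand $\gamma (b'b) = \mathbf{d}(\gamma (b)) = \mathbf{d}(\gamma (a)) = \gamma (e)$, so they are $\gamma$-related; on the other hand $be = b$ forces $(b'b)e = b'b$. Since $\gamma$-related idempotents $f,g$ of a right normal band satisfy $fg = g$ (this is exactly the computation in the proof of Lemma~\ref{le: property}(5), where $e^{\ast}e = e$ was deduced from $e \, \gamma \, e^{\ast}$), we also get $(b'b)e = e$, and comparing the two expressions yields $b'b = e$. Hence $b \, \mathscr{L} \, e$, so $b \in L_{e}$ and surjectivity follows.

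For part (2), I would first show that $\mathbf{d}(s) := s's$ is independent of the choice of $s' \in V(s)$: in any regular semigroup $s \, \mathscr{L} \, s's$, so two such idempotents are $\mathscr{L}$-related, and $\mathscr{L}$ is the identity on a right normal band, forcing them to coincide. The map $\Phi \colon s \mapsto (\gamma (s), \mathbf{d}(s))$ then lands in the stated subset because $\gamma (\mathbf{d}(s)) = \mathbf{d}(\gamma (s))$, so the pair is of the required form. Injectivity of $\Phi$ is Lemma~\ref{le: property}(1) once more (equal $\gamma$-image and equal domain idempotent place the two elements in a common $Se$), and surjectivity is precisely the surjectivity from part (1): an admissible pair $(\gamma (s),e)$ satisfies $s's \, \gamma \, e$, i.e.\ $\gamma (s) \in L_{\theta (e)}$, so part (1) supplies the unique $t \in L_{e}$ with $\gamma (t) = \gamma (s)$, and $t \in L_{e}$ gives $\mathbf{d}(t) = e$ by triviality of $\mathscr{L}$ on the band. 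Thus $\Phi$ is the desired bijection.
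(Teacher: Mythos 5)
Your proof is correct and follows essentially the same route as the paper's: both prove part (1) via injectivity from Lemma~\ref{le: property}(1) and surjectivity by taking the preimage candidate $ae$, and both obtain part (2) from the map $s \mapsto (\gamma(s), s's)$, well-defined because $\mathscr{L}$ is trivial on a right normal band. The only cosmetic difference is inside surjectivity, where you pin down $b'b = e$ exactly via the right-normal identity applied to $\gamma$-related idempotents, while the paper deduces $te \, \mathscr{L} \, e$ from the mutual-inverse relation $e = et'te$; both arguments use the same underlying facts.
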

\begin{proof} (1)
Suppose first that $s \,\mathscr{L}\, t$ and $\gamma (s) = \gamma (t)$.
Let $s' \in V(s)$.
Then $s \, \mathscr{L}\, s's$
and
and 
$t \mathscr{L} s't$
since $s' \in V(t)$.
It follows that $s's \, \mathscr{L} \, s't$.
But in a right normal band the $\mathscr{L}$-relation is just equality and so 
$s's = s't$.
We have shown that $s,t \in Ss's$.
Thus by Lemma~\ref{le: property}, we have that $s = t$, as required.

Next, let $e \in E(S)$ and $\gamma (t) \, \mathscr{L} \, \gamma (e)$. 
Let $t' \in V(t)$.
Then $\gamma (t't) \, \mathscr{L} \, \gamma (e)$.
Since both elements are idempotent, we have that $t't \, \gamma\, e$.
It follows that $e = et'te$ and $t't = t'tet't$.
Consider the element $te \in S$.
Then $\gamma (te) = \gamma (t) \gamma (e) = \gamma (t)$.
From $e = (et') te$ it is immediate that $te \, \mathscr{L} \, e$.

(2) Put $S/\gamma \ast E(S)$ equal to the set of ordered pairs satisfying the condition.
Define $\kappa \colon S \rightarrow S/\gamma \ast E(S)$ by $\kappa (s) = (\gamma (s), s's)$.
This is well-defined since in a right normal band the $\mathscr{L}$-relation is equality.
The fact that $\kappa$ is a bijection follows by (1) above.
\end{proof}

In our next result, we characterize \'etale homomorphisms.

\begin{proposition}\label{prop: anja} Let $S$ be an inverse semigroup and $T$ a right generalized inverse $\ast$-semigroup.
\begin{enumerate}

\item Let  $\theta \colon T \rightarrow S$ be an \'etale homomorphism.
Then the image of $\theta$ is a left ideal of $S$.

\item Let $\theta \colon T \rightarrow S$ be a homomorphism 
such that whenever $a,b \in Te$, where $e$ is an idempotent, and $\theta (a) = \theta (b)$ then $a = b$.
Then $\mbox{\rm ker}(\theta) = \gamma$.

\item Let $\theta \colon T \rightarrow S$ be a homomorphism whose kernel is $\gamma$ and whose image is a left ideal of $S$.
Then $\theta$ is \'etale.

\end{enumerate}
\end{proposition}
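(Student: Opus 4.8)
The plan is to treat the three parts in order, using the $\ast$-operation to write each $t \in T$ as an element of $Te$ for a suitable idempotent $e$, and to lean on Lemma~\ref{le: property}(1) for the injectivity statements; throughout I use the remark that $\theta$ automatically preserves the $\ast$, so that $\theta(t^{\ast}) = \theta(t)^{-1}$ in $S$. For (1) I would verify directly that $S\,\im(\theta) \subseteq \im(\theta)$. Given $t \in T$, set $e = t^{\ast}t$; this is idempotent and $te = t$, so $t \in Te$, and since $\theta$ is \'etale we have $\theta(Te) = S\theta(e)$, whence $S\theta(e) \subseteq \im(\theta)$. For $s \in S$ the equalities $\theta(t) = \theta(te) = \theta(t)\theta(e)$ together with the idempotency of $\theta(e)$ give $s\theta(t) = (s\theta(t))\theta(e) \in S\theta(e) \subseteq \im(\theta)$, which is what is required.

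For (2) I would prove the two inclusions separately. The inclusion $\gamma \subseteq \ker(\theta)$ is formal: $\im(\theta) \cong T/\ker(\theta)$ is a regular subsemigroup of $S$ whose idempotents, being idempotents of $S$, commute, so $\im(\theta)$ is inverse, and minimality of $\gamma$ among inverse congruences forces $\gamma \subseteq \ker(\theta)$. For the reverse inclusion, suppose $\theta(a) = \theta(b)$ and put $e = a^{\ast}a$, so that both $a$ and $ba^{\ast}a$ lie in $Te$. Since $\theta$ preserves $\ast$ and $\theta(a) = \theta(b)$, I compute $\theta(ba^{\ast}a) = \theta(a)\theta(a)^{-1}\theta(a) = \theta(a)$, so the hypothesis yields $ba^{\ast}a = a$; symmetrically $ab^{\ast}b = b$. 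From these two identities I would deduce that $a^{\ast}$ is a common inverse of $a$ and $b$, giving $a \, \gamma \, b$.

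For (3), injectivity of each restriction $(\theta \mid Te)$ is immediate from $\ker(\theta) = \gamma$ and Lemma~\ref{le: property}(1): if $a,b \in Te$ and $\theta(a) = \theta(b)$ then $\gamma(a) = \gamma(b)$, hence $a = b$. For surjectivity onto $S\theta(e)$, take $s\theta(e)$ with $s \in S$; since $\im(\theta)$ is a left ideal this element lies in $\im(\theta)$, say $s\theta(e) = \theta(t)$, and then $\theta(te) = \theta(t)\theta(e) = s\theta(e)$ with $te \in Te$, so the restriction is onto, and $\theta$ is \'etale.

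I expect the only real work to be the common-inverse computation in part (2); everything else is bookkeeping with idempotents of a right normal band. Starting from $ba^{\ast}a = a$ and $ab^{\ast}b = b$, I would check $ba^{\ast}b = ba^{\ast}(ab^{\ast}b) = (ba^{\ast}a)b^{\ast}b = ab^{\ast}b = b$ and $a^{\ast}ba^{\ast} = (a^{\ast}ba^{\ast}a)a^{\ast} = (a^{\ast}a)a^{\ast} = a^{\ast}$, where the latter uses $a^{\ast} = a^{\ast}aa^{\ast}$ and $a^{\ast}(ba^{\ast}a) = a^{\ast}a$. This shows $a^{\ast} \in V(a) \cap V(b)$, so $V(a) \cap V(b) \neq \emptyset$ and $a \, \gamma \, b$, completing the reverse inclusion and hence the proposition.
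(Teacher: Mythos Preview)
Your argument is correct and follows essentially the same strategy as the paper, with only minor variations. In~(2), for the inclusion $\gamma \subseteq \ker(\theta)$ you invoke the minimality of $\gamma$ among inverse congruences (using that the image in $S$ is inverse), whereas the paper computes directly that $a^{\ast} \in V(b)$ forces $\theta(a)^{-1} = \theta(b)^{-1}$; both are standard. For the reverse inclusion the paper applies the hypothesis to the pairs $(a^{\ast}, a^{\ast}ba^{\ast}) \in T(aa^{\ast})$ and $(b, ba^{\ast}b) \in T(b^{\ast}b)$ to obtain $a^{\ast} \in V(b)$ in one step, while you first extract $ba^{\ast}a = a$ and $ab^{\ast}b = b$ and then derive $ba^{\ast}b = b$ and $a^{\ast}ba^{\ast} = a^{\ast}$ from these; your chain of equalities is sound. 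In~(3), your injectivity argument---apply Lemma~\ref{le: property}(1) immediately once $\ker(\theta) = \gamma$ gives $\gamma(a) = \gamma(b)$ for $a,b \in Te$---is in fact more direct than the paper's, which first compares $t_{1}t_{1}^{\ast}$ with $t_{2}t_{1}^{\ast}$ before reaching the same conclusion.
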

\begin{proof} (1). Let $x \in \mbox{\rm im}(\theta)$ and $s \in S$.
We prove that $sx \in \mbox{\rm im}(\theta)$. 
Let $x = \theta (t)$ where $t \in T$.
Put $e = t^{\ast}t$.
Then by assumption, $(\theta \mid Te) \colon Te \rightarrow S \theta (e)$ is a bijection.
But $x = x\theta (e)$ and so $sx \in S \theta (e)$.
It follows that there is a $u \in T$ such that $\theta (u) = sx$, as required.

(2). Let $\gamma (a) = \gamma (b)$.
Then $a^{\ast} \in V(b)$.
Thus $b = ba^{\ast}b$ and $a^{\ast} = a^{\ast}ba^{\ast}$.
It follows that $\theta (a)^{-1} = \theta (b)^{-1}$ and so $\theta (a) = \theta (b)$.
Conversely, let $\theta (a) = \theta (b)$.
Then $\theta (a^{\ast}) = \theta (a^{\ast}) \theta(b) \theta (a^{\ast})$ and $\theta (b) = \theta (b) \theta (a^{\ast}) \theta (b)$.
Thus by assumption, $a^{\ast} = a^{\ast}ba^{\ast}$ and $b = ba^{\ast}b$.
We have shown that $V(a) \cap V(b) \neq \emptyset$ and so $\gamma (a) = \gamma (b)$.

(3). Let $e \in E(T)$.
We need to prove that $(\theta \mid Te) \colon Te \rightarrow S \theta (e)$ is a bijection.
Let $x \in S \theta (e)$.
Then $x = x \theta (e)$.
But by assumption, the image of $\theta$ is a left ideal and so $x \in \mbox{\rm im}(\theta)$.
Thus there exists $u \in T$ such that $\theta (u) = x$.
Now observe that $\theta (ue) = x$.
It follows that $(\theta \mid Te)$ is surjective.
Now let $t_{1},t_{2} \in Te$ such that $\theta (t_{1}) = \theta (t_{2})$.
We prove that $t_{1} = t_{2}$.
Observe that 
both $t_{1}^{\ast}t_{1}$ and $t_{1}^{\ast}t_{2}$ are idempotents less than or equal to $e$ and so commute.
But $\theta (t_{1}t_{1}^{\ast}) = \theta (t_{2}t_{1}^{\ast})$ and so $t_{1}t_{1}^{\ast} = t_{2}t_{1}^{\ast}$
by Lemma~2.1(1).
It follows that
$$t_{1}^{\ast}t_{1} = t_{1}^{\ast} \cdot t_{1}t_{1}^{\ast} \cdot t_{1} = t_{1}^{\ast} \cdot t_{2}t_{1}^{\ast} \cdot t_{1}
= t_{1}^{\ast}t_{1} \cdot t_{1}^{\ast}t_{2} = t_{1}^{\ast}t_{2}.$$
Therefore
$$t_{1} 
= t_{1}t_{1}^{\ast}t_{1} 
= t_{1}t_{1}^{\ast}t_{2} 
= t_{2}t_{1}^{\ast}t_{2} 
= t_{2}.$$
\end{proof}

We now begin the proof of our main theorem.
Let $(S,X,p)$ be a left \'etale action.
Define the set
$$S \ast X = \{(s,x) \in S \times X \colon \mathbf{d}(s) = p(x) \}.$$

\begin{proposition}\label{prop: action_to_map} 
Let $(S,X,p)$ be a left \'etale action.
On the set $S \ast X$ define
$$(s,x)(t,y) = (st,\mathbf{d}(st) \cdot y)$$
and denote by $\pi_{X} \colon S \ast X \rightarrow S$ the projection map $(s,x) \mapsto s$.
Define $(s,x)^{\ast} = (s^{-1}, s \cdot x)$.

\begin{enumerate}

\item $S \ast X$ is a right generalized inverse $\ast$-semigroup whose idempotents are precisely the elements of the form $(p(x),x)$.

\item On the regular semigroup $S \ast X$ the natural partial order $(s,x) \leq (t,y)$ is given by $s \leq t$ and $x \leq y$.

\item The projection map $\pi_{X}$ is \'etale. 
It is surjective if and only if the action has global support.

\item Let $\theta \colon X \rightarrow Y$ be a morphism of \'etale actions $(S,X,p)$ and $(S,Y,q)$.
Then $\bar{\theta} \colon S \ast X \rightarrow S \ast Y$ defined by $(s,x) \mapsto (s, \theta (x))$ is a homomorphism of $\ast$-semigroups and
$\pi_{Y} \bar{\theta} = \pi_{X}$. 

\item We have constructed a functor from $\mathscr{B}(S)$ to $\mbox{\rm Et}/S$.

\end{enumerate}
\end{proposition}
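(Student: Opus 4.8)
The plan is to treat the five parts in order, with part~(1) carrying essentially all of the work. First I would check that both operations are well defined. For the multiplication this means confirming $(st,\mathbf{d}(st)\cdot y)\in S\ast X$, that is, $p(\mathbf{d}(st)\cdot y)=\mathbf{d}(st)$; this follows from (E2) together with $\mathbf{d}(st)\leq\mathbf{d}(t)=p(y)$ in the inverse semigroup $S$. For the star, $(s,x)^{\ast}=(s^{-1},s\cdot x)$ lands in $S\ast X$ because $\mathbf{d}(s^{-1})=\mathbf{r}(s)=p(s\cdot x)$ by (E2). Associativity then reduces, after cancelling the common first coordinate $stu$, to the identity $\mathbf{d}(stu)\cdot z=(\mathbf{d}(stu)\mathbf{d}(tu))\cdot z$, which holds since $\mathbf{d}(stu)\leq\mathbf{d}(tu)$ forces $\mathbf{d}(stu)\mathbf{d}(tu)=\mathbf{d}(stu)$.

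Next I would identify the idempotents: a direct computation shows $(s,x)$ is idempotent exactly when $s$ is idempotent and $s=\mathbf{d}(s)=p(x)$, so the idempotents are precisely the elements $(p(x),x)$, with $p(x)\cdot x=x$ guaranteed by (E1). Multiplying two such elements and using (E2) shows the product again has this form with first coordinate $p(x)p(y)$; since idempotents of $S$ commute, the first coordinates of $efg$ and $feg$ coincide, while the second coordinate of each equals $p(x)p(y)p(z)\cdot z$, so $E(S\ast X)$ is a right normal band. Verifying (S1) and (S2) is short and exhibits $(s,x)^{\ast}$ as an inverse, so $S\ast X$ is regular and hence a right generalized inverse semigroup. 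I would then verify (S3) by expanding both sides; \textbf{this is the main obstacle}, being the one genuinely long calculation, but each side collapses to $((st)^{-1},st\cdot y)$ after repeated use of the commutativity of idempotents in $S$ and the action law $s\cdot(t\cdot x)=(st)\cdot x$. Finally (S4) is free from Lemma~\ref{le: property}(5), which completes part~(1).

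For part~(2) I would apply the order formula of Lemma~\ref{le: order}(2): $(s,x)\leq(t,y)$ if and only if $(s,x)=(s,x)(s,x)^{\ast}(t,y)$. Computing the right-hand side gives $(\mathbf{r}(s)t,\mathbf{d}(\mathbf{r}(s)t)\cdot y)$, so the condition splits into $s=\mathbf{r}(s)t$ and $x=\mathbf{d}(\mathbf{r}(s)t)\cdot y$. The first is exactly $s\leq t$ in $S$; granting it, the second becomes $x=p(x)\cdot y$, which is precisely $x\leq y$ in $X$ (one direction is immediate, the other uses (E1) and (E2)).

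Parts~(3)--(5) are then comparatively routine. For the \'etale claim I would fix an idempotent $(p(x),x)$ and show that $(S\ast X)(p(x),x)=\{(u,\mathbf{d}(u)\cdot x)\colon u\in Sp(x)\}$, membership being checked via (E2) once $\mathbf{d}(u)\leq p(x)$; then $\pi_{X}$ restricts to the evident bijection $(u,\mathbf{d}(u)\cdot x)\mapsto u$ onto $Sp(x)=S\pi_{X}(p(x),x)$, so $\pi_{X}$ is \'etale. Surjectivity of $\pi_{X}$ amounts to every idempotent $\mathbf{d}(s)$ lying in the image of $p$, that is, to the action having global support. For part~(4) the map $\bar{\theta}$ is well defined because $q(\theta(x))=p(x)$, and both the homomorphism and $\ast$-preservation properties are immediate from the equivariance $\theta(s\cdot x)=s\cdot\theta(x)$, while $\pi_{Y}\bar{\theta}=\pi_{X}$ is visible from the formulas. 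Part~(5) then only requires noting that $(\pi_{X}\colon S\ast X\to S)$ is an object of $\mbox{Et}/S$ by parts~(1) and~(3), that $\bar{\theta}$ is a morphism there by part~(4), and that the assignment respects identities and composition, which is clear; this yields the functor $\mathscr{B}(S)\to\mbox{Et}/S$.
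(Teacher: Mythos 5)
Your proof is correct. For parts (1), (2), (4) and (5) it is essentially the paper's own argument with the ``routine'' steps written out: the paper likewise characterizes the idempotents as the elements $(p(x),x)$, obtains right normality from commutativity of idempotents in $S$ (via the computation $(p(x),x)(p(y),y)=(p(x)p(y),p(x)\cdot y)$), and derives part (2) from Lemma~\ref{le: order} exactly as you do; your observation that both sides of (S3) collapse to $((st)^{-1},st\cdot y)$ is precisely the verification the paper leaves to the reader. The one genuine divergence is part (3). You prove that $\pi_{X}$ is \'etale directly, by identifying $(S\ast X)(p(x),x)$ with $\{(u,\mathbf{d}(u)\cdot x)\colon u\in Sp(x)\}$ and checking that $\pi_{X}$ restricts to a bijection onto $Sp(x)$. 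The paper instead routes through Proposition~\ref{prop: anja}(3): it notes that $\ker\pi_{X}=\gamma$ and that the image of $\pi_{X}$ is a left ideal of $S$ (since $\mathbf{d}(ts)\leq\mathbf{d}(s)$ gives $ts=\pi_{X}(ts,\mathbf{d}(ts)\cdot x)$), and then cites that characterization of \'etale homomorphisms. Your route is more elementary and self-contained --- it never uses Proposition~\ref{prop: anja} and yields an explicit description of the principal left ideals of $S\ast X$ --- whereas the paper's route is shorter given the machinery already in place and shows what Proposition~\ref{prop: anja} was proved for, namely that \'etale maps are exactly the homomorphisms with kernel $\gamma$ and left-ideal image. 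Both arguments are complete, and your treatment of surjectivity versus global support agrees with the claim the paper describes as easy to check.
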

 \begin{proof} (1) The proof of associativity is pleasantly trivial.
Observe that elements of the form $(p(x),x)$ are idempotents since
$$(p(x),x)(p(x),x) = (p(x), p(x) \cdot x) = (p(x), x).$$
Conversely, if $(s,x)$ is an idempotent then we have that $s = s^{2}$ and $x = \mathbf{d}(s) \cdot x$.
It follows that $s = e$ is an idempotent and that $e = p(x)$.
We put $(s,x)^{\ast} = (s^{-1}, s \cdot x)$
and this is well-defined since $p(s \cdot x) = sp(x)s^{-1} = ss^{-1}ss^{-1} = \d (s^{-1})$.
It is routine to check that our axioms for a $\ast$-semigroup hold.
A simple calculation shows that
$(p(x),x)(p(y),y) = (p(x)p(y), p(x) \cdot y)$.
It readily follows that the set of idempotents forms a right normal band.
We have therefore shown that $S \ast X$ is a right generalized inverse $\ast$-semigroup.

(2) Suppose that $(s,x) \leq (t,y)$.
By Lemma~\ref{le: order}, we have that
$$(s,x) = (t,y)(s,x)^{\ast}(s,x) 
\mbox{ and } 
(s,x)^{\ast}(s,x) = (s,x)^{\ast}(s,x) (t,y)^{\ast}(t,y).$$
This quickly reduces to $s \leq t$ and $x \leq y$.  
Conversely, suppose that $s \leq t$ and $x \leq y$.
Then 
$$(s,x)(s,x)^{\ast}(t,y) = (s,x)(s^{-1},s \cdot x)(t,y) = (s, \mathbf{d}(s) \cdot y) = (s,x).$$
Applying Lemma~\ref{le: order}, it follows that $(s,x) \leq (t,y)$.

(3) It is immediate that the projection map is a homomorphism
and that its kernel is $\gamma$.
We show that its image is a left ideal.
Let $s = \pi_{X}(s,x)$ and $t \in S$.
Then $\mathbf{d}(ts) \leq \mathbf{d}(s)$ and therefore $ts = \pi_{X}(ts, \mathbf{d}(ts) \cdot x)$.
We now apply Proposition~\ref{prop: anja}.
The fact that the projection map is surjective if and only if the action has global support is easy to check.

(4) The map $\bar{\theta}$ is well-defined since $q(\theta (x)) = p(x)$.
It is a homomorphism because $\theta (s \cdot x) = s \cdot \theta (x)$.
The proofs of the remaining claims are straightforward.

(5) This is now routine.
\end{proof}

We now construct a functor going in the opposite direction.

\begin{proposition}\label{prop: map_to_action} Let $T$ be a right generalized inverse $\ast$-semigroup, $S$ an inverse semigroup and  
$\theta \colon  T \rightarrow S$ an \'etale homomorphism. 

\begin{enumerate}

\item Define $S \times E(T) \rightarrow E(T)$ by 
$$s \cdot e = tt^{\ast}$$ 
where $t^{\ast}t \leq e$ and $\theta (t) = s \theta (e)$.
Also define $p \colon E(T) \rightarrow E(S)$ by $p(e) = \theta (e)$.
Then $(S,E(T),p)$ is a a left \'etale action.

\item Let $\alpha \colon T_{1} \rightarrow T_{2}$ be a $\ast$-homomorphism of \'etale maps from
$\theta_{1} \colon T_{1} \rightarrow S$ to $\theta_{2} \colon T_{2} \rightarrow S$.
Then $\bar{\alpha} = (\alpha \mid E(T_{1})) \colon E(T_{1}) \rightarrow E(T_{2})$
is a morphism of \'etale actions.

\item We have constructed a functor from $\mbox{\rm Et}/S$ to $\mathscr{B}(S)$.

\end{enumerate}
\end{proposition}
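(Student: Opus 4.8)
The plan is to verify each clause in turn, with essentially all of the work concentrated in part (1). The preliminary observation I would record first is that, for $t \in T$, one has $t \in Te$ if and only if $t^{\ast}t \leq e$. Indeed, if $t = te$ then $t^{\ast}t = t^{\ast}te$, so $t^{\ast}t \leq e$ in the right normal band $E(T)$ (here $fe = f$ forces $ef = f$, since $efe = fee = fe = f$ and hence $ef = e(efe) = efe = f$); conversely, if $t^{\ast}t \leq e$ then $te = tt^{\ast}te = tt^{\ast}t = t$. Since $\theta$ is \'etale, $(\theta \mid Te) \colon Te \to S\theta(e)$ is a bijection and $s\theta(e) \in S\theta(e)$, so there is a unique $t \in Te$ with $\theta(t) = s\theta(e)$; by the equivalence just proved this is exactly the unique element witnessing the definition of $s \cdot e$. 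Hence $s \cdot e = tt^{\ast}$ is a well-defined idempotent, and $p(e) = \theta(e) \in E(S)$ because $\theta$ is a homomorphism.

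The heart of part (1) is the action axiom $(s_{1}s_{2}) \cdot e = s_{1} \cdot (s_{2} \cdot e)$. I would argue as follows. Let $t_{2} \in Te$ witness $s_{2} \cdot e$, so $\theta(t_{2}) = s_{2}\theta(e)$ and $s_{2} \cdot e = t_{2}t_{2}^{\ast} =: f$; note $\theta(f) = \theta(t_{2})\theta(t_{2})^{-1} = s_{2}\theta(e)s_{2}^{-1}$. Let $t_{1} \in Tf$ witness $s_{1} \cdot f$, so $\theta(t_{1}) = s_{1}\theta(f)$ and $s_{1}\cdot(s_{2}\cdot e) = t_{1}t_{1}^{\ast}$. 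A short computation in the inverse semigroup $S$ gives $\theta(t_{1}t_{2}) = s_{1}s_{2}\theta(e)$, and since $t_{1}t_{2} \in Te$, uniqueness identifies $t_{1}t_{2}$ as the element witnessing $(s_{1}s_{2}) \cdot e$. Finally, using $t_{1} = t_{1}f = t_{1}t_{2}t_{2}^{\ast}$ together with (S3) I obtain $(t_{1}t_{2})^{\ast} = t_{2}^{\ast}(t_{1}t_{2}t_{2}^{\ast})^{\ast} = t_{2}^{\ast}t_{1}^{\ast}$, whence $(t_{1}t_{2})(t_{1}t_{2})^{\ast} = t_{1}(t_{2}t_{2}^{\ast})t_{1}^{\ast} = t_{1}ft_{1}^{\ast} = t_{1}t_{1}^{\ast}$, so that $(s_{1}s_{2})\cdot e = s_{1}\cdot(s_{2}\cdot e)$. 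Axiom (E1) then holds because the witness for $\theta(e)\cdot e$ is $e$ itself, giving $\theta(e)\cdot e = ee^{\ast} = e$ by (S4); and (E2) follows from $p(s\cdot e) = \theta(tt^{\ast}) = \theta(t)\theta(t)^{-1} = \mathbf{r}(s\theta(e)) = s\theta(e)s^{-1} = sp(e)s^{-1}$.

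For part (2) I would check the three defining conditions for $\bar{\alpha} = (\alpha \mid E(T_{1}))$. It lands in $E(T_{2})$ since $\alpha$ is a homomorphism; it satisfies $q(\bar{\alpha}(e)) = \theta_{2}(\alpha(e)) = \theta_{1}(e) = p(e)$ because $\theta_{2}\alpha = \theta_{1}$; and it is equivariant because, if $t$ witnesses $s\cdot e$ in $T_{1}$, then $\alpha(t) \in T_{2}\alpha(e)$ and $\theta_{2}(\alpha(t)) = \theta_{1}(t) = s\theta_{1}(e) = s\theta_{2}(\alpha(e))$, so $\alpha(t)$ witnesses $s\cdot\alpha(e)$, yielding $\bar{\alpha}(s\cdot e) = \alpha(tt^{\ast}) = \alpha(t)\alpha(t)^{\ast} = s\cdot\bar{\alpha}(e)$ (using that $\alpha$ is a $\ast$-homomorphism). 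Part (3) is then routine: the assignment $\alpha \mapsto \bar{\alpha}$ evidently preserves identities and composites, so we obtain the desired functor.

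The main obstacle I expect is the action axiom in part (1): proving $(s_{1}s_{2})\cdot e = s_{1}\cdot(s_{2}\cdot e)$ requires both the identification of $t_{1}t_{2}$ as the witness for $(s_{1}s_{2})\cdot e$ via uniqueness in $Te$ and the reduction $(t_{1}t_{2})(t_{1}t_{2})^{\ast} = t_{1}t_{1}^{\ast}$, which is precisely the place where axiom (S3) and the \'etale hypothesis genuinely interact. Everything else, including the equivariance in part (2) and the functoriality in part (3), is bookkeeping once well-definedness is in hand.
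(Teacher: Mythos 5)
Your proof is correct and follows essentially the same route as the paper's: well-definedness and the action axiom via the uniqueness of the witness guaranteed by the \'etale condition (identifying the product $t_{1}t_{2}$ of the two witnesses as the witness for $(s_{1}s_{2})\cdot e$ and collapsing $(t_{1}t_{2})(t_{1}t_{2})^{\ast}$ to $t_{1}t_{1}^{\ast}$ via (S3)), then (E1), (E2), and equivariance in (2) again by uniqueness. Your explicit preliminary equivalence $t \in Te \Leftrightarrow t^{\ast}t \leq e$ is a careful addition that the paper leaves implicit, but it does not change the argument.
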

\begin{proof} 
(1) Because the map $\theta$ is \'etale the element $t$ is uniquely defined.
We prove first that $s \cdot (t \cdot e) = (st) \cdot e$.
By definition, $(st) \cdot e = cc^{\ast}$ where $c^{\ast}c \leq e$ and $\theta (c) = st \theta (e)$.
In addition,
$t \cdot e = bb^{\ast}$ where $b^{\ast}b \leq e$ and $\theta (b) = t \theta (e)$,
and
$s \cdot bb^{\ast} = aa^{\ast}$ where $a^{\ast}a \leq bb^{\ast}$ and $\theta (a) = s \theta (bb^{\ast})$.
Observe that
$(ab)^{\ast}ab = b^{\ast}(abb^{\ast})^{\ast}ab = b^{\ast}a^{\ast}ab$ since $a^{\ast}a \leq bb^{\ast}$.
But  $b^{\ast}a^{\ast}ab \leq b^{\ast}b \leq e$.
Thus $(ab)^{\ast}ab \leq e$.
In addition, 
$\theta (ab) = \theta (a) \theta (b) = s \theta (bb^{\ast}) t \theta (e) = st \theta (e)$.
It follows by uniqueness that $c = ab$.
But $cc^{\ast} = ab(ab)^{\ast} = abb^{\ast}(abb^{\ast})^{\ast} = aa^{\ast}$.
Where throughout we have used axiom (S3) for $\ast$-semigroups.
We have an action, we now prove that it is an \'etale action.

(E1) holds: $p(e) \cdot e = aa^{\ast}$ where $a^{\ast}a \leq e$ and $\theta (a) = p(e)\theta (e) = p(e)$.
It follows by uniqueness and axiom (S4) that $a = e$ and so $p(e) \cdot e = e$, as required. 

(E2) holds: $p(s \cdot e) = \theta (s \cdot e)$.
Let $t^{\ast}t \leq e$ and $\theta (t) = sp(e)$.
Now $\theta (tt^{\ast}) = \theta (t) \theta (t)^{-1} = sp(e)s^{-1}$, as required.

(2) Since $\alpha$ is a homomorphism of semigroups $\bar{\alpha}$ is a well-defined map.
We have to show that it is a map of \'etale actions.
Let $s \cdot e = aa^{\ast}$ where $a^{\ast}a \leq e$, and $\theta_{1}(a) = s \theta_{1}(e)$.
Let $s \cdot \bar{\alpha}(e) = bb^{\ast}$ where $b^{\ast}b \leq \bar{\alpha}(e)$, 
and $\theta_{2} (b) = s \theta_{2} (\bar{\alpha}(e)) = s \theta_{1}(e)$.
But $\alpha (a)^{\ast}\alpha (a) \leq \overline{\alpha}(e)$ and $\theta_{2}(\alpha (a)) = \theta_{2} (b)$.
It follows by uniqueness that $\alpha (a) = b$.
Hence $\bar{\alpha}(s \cdot e) = s \cdot \bar{\alpha}(e)$.
It now readily follows that $\bar{\alpha}$ is a morphism of left \'etale actions.

(3) The proof of this is routine.
\end{proof}

It only remains to show that the two functors defined above determine an equivalence of categories.

\begin{proposition}\label{prop: iteration} \mbox{} 
\begin{enumerate}

\item Let $(S,X,p)$ be a left \'etale action.
Then this is isomorphic to the left \'etale action constructed from $\pi_{X} \colon S \ast X \rightarrow S$.  

\item Let $\theta \colon  T \rightarrow S$ be an \'etale homomorphism from a right generalized $\ast$-semigroup to an inverse semigroup.
Then this is isomorphic to the \'etale map constructed from the left \'etale action $(S,E(T),p)$.

\end{enumerate}
\end{proposition}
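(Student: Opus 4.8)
The plan is to exhibit, for each of the two round-trips, the isomorphism suggested by the coordinatizations already in hand, and to verify compatibility by leaning on the uniqueness clause built into the reconstructed action.

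For (1) I would define $\phi \colon X \to E(S \ast X)$ by $\phi(x) = (p(x),x)$. By Proposition~\ref{prop: action_to_map}(1) the idempotents of $S \ast X$ are exactly the elements $(p(x),x)$, and since the second coordinate records $x$, the map $\phi$ is a bijection of $X$ onto $E(S \ast X)$. The reconstructed structure map sends $(p(x),x)$ to $\pi_X(p(x),x) = p(x)$, so $\phi$ is compatible with $p$. The one substantive point is that $\phi$ intertwines the original action with the action on $E(S \ast X)$ built in Proposition~\ref{prop: map_to_action}(1). To check $s \cdot \phi(x) = \phi(s \cdot x)$ I would unwind the definition: $s \cdot (p(x),x) = \tau \tau^{\ast}$, where $\tau$ is the unique element of $S \ast X$ with $\tau^{\ast}\tau \leq (p(x),x)$ and $\pi_X(\tau) = s\, p(x)$. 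A direct guess, $\tau = (s\,p(x), \mathbf{d}(s\,p(x)) \cdot x)$, is seen to satisfy both conditions using (E1) and (E2), and then $\tau\tau^{\ast}$ computes to $(p(s \cdot x), s \cdot x) = \phi(s \cdot x)$, again via (E1) and (E2). Uniqueness of $\tau$ (the étale property of $\pi_X$, Proposition~\ref{prop: action_to_map}(3)) closes this step.

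For (2) I would define $\psi \colon T \to S \ast E(T)$ by $\psi(t) = (\theta(t), t^{\ast}t)$. This lands in $S \ast E(T)$ because $\mathbf{d}(\theta(t)) = \theta(t)^{-1}\theta(t) = \theta(t^{\ast}t)$, using that $\theta$ preserves the $\ast$ operation; and clearly $\pi_{E(T)} \psi = \theta$, so $\psi$ is a morphism over $S$. Bijectivity I would read off the étale hypothesis together with Proposition~\ref{prop: anja}: given $(s,e) \in S \ast E(T)$ we have $s = s\,\theta(e) \in \mbox{\rm im}(\theta)$ since the image is a left ideal (Proposition~\ref{prop: anja}(1)), and the bijection $(\theta \mid Te) \colon Te \to S\theta(e)$ furnishes the unique $t \in Te$ with $\theta(t) = s$; one then checks $t^{\ast}t = e$ from $t^{\ast}t \leq e$ together with $\theta(t^{\ast}t) = \theta(e)$ and injectivity of $(\theta \mid Te)$. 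The same injectivity gives that $\psi$ is one-to-one.

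It remains to verify that $\psi$ is a $\ast$-homomorphism, and this is where the work sits. Each identity reduces, via the uniqueness of the element defining the reconstructed action, to an equality of idempotents in $E(T)$. For multiplicativity, matching second coordinates amounts to showing $\mathbf{d}(\theta(t_1 t_2)) \cdot (t_2^{\ast}t_2) = (t_1 t_2)^{\ast}(t_1 t_2)$; here I would check that $\tau = (t_1 t_2)^{\ast}(t_1 t_2)$ satisfies the two defining conditions, the inequality $(t_1 t_2)^{\ast}(t_1 t_2) \leq t_2^{\ast}t_2$ following from (S3) and right normality (concretely, from $ge = g$ one deduces $eg = g$ in a right normal band). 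For preservation of $\ast$ one checks similarly that $\tau = t$ realises $\theta(t) \cdot (t^{\ast}t) = t t^{\ast}$. The main obstacle is precisely these band-and-$\ast$ manipulations: tracking the reconstructed action through (S3), (S4) and the right normality of the idempotents, where the uniqueness clause in Proposition~\ref{prop: map_to_action}(1) is the lever that turns each verification into a short computation.
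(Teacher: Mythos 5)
Your proposal is correct and follows essentially the same route as the paper: the isomorphisms $x \mapsto (p(x),x)$ and $t \mapsto (\theta(t), t^{\ast}t)$ are exactly the ones used there, and your verification in (1) — guessing $\tau = (sp(x), \mathbf{d}(sp(x))\cdot x)$ and invoking uniqueness from the \'etale property of $\pi_X$ — reproduces the paper's computation. The only differences are cosmetic: in (2) the paper obtains bijectivity by citing Proposition~\ref{prop: co-ordinatization} where you argue directly from the \'etale property of $\theta$, and it dismisses the $\ast$-homomorphism check as ``routine'' where you carry it out (correctly, including the needed fact that $ge=g$ forces $eg=g$ in a right normal band).
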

\begin{proof} 

(1) The set $E(S \ast X)$ is in bijective correspondence with the set $X$ via the map $(p(x),x) \mapsto x$.
By definition $s \cdot (p(x),x) = (t,y)(t,y)^{\ast}$ where
$(t,y)(p(x),x) = (t,y)$ and $\pi_{X}(t,y) = sp(x)$.
It follows that $t = sp(x)$.
Now $(t,y)(t,y)^{\ast} = (tt^{-1}, t \cdot y)$ and $y = \d (t) \cdot x$.
It follows that $(t,y)(t,y)^{\ast} = (p(s \cdot x), s \cdot x)$.
Hence the two \'etale actions are naturally isomorphic.

(2) Given an \'etale map $\theta \colon T \rightarrow S$, we may construct an \'etale action $S \times E(T) \rightarrow E(T)$.
Hence we may construct an \'etale map $\pi_{E(T)} \colon S \ast E(T) \rightarrow S$.
Define $\alpha \colon T \rightarrow S \ast E(T)$ by $\alpha (t) = (\theta (t), t^{\ast}t)$.
This is well-defined and  $\pi_{E(T)} \alpha = \theta$.
Observe that $\alpha$ is a bijection by Proposition~\ref{prop: co-ordinatization}.
It remains to show that it is a $\ast$-homomorphsim which is routine.
\end{proof}

We have therefore proved Theorem~\ref{th: classifying_space}.

We now consider the case of \'etale actions {\em with global support}.
Let $(S,X,p)$ be such an action.
Then $\pi_{X} \colon S \ast X \rightarrow S$ is a surjective \'etale map from the right generalized inverse $\ast$-semigroup
by Proposition~2.5.
But by Proposition~2.4, the kernel of $\phi_{X}$ is $\gamma$.
It follows that the $\ast$-semigroup $S \ast X$ contains all the essential information about the action $(S,X,p)$.

\begin{theorem}\label{the: global_support} 
The category of \'etale actions of inverse semigroups with global support is equivalent to the 
category of right generalized inverse $\ast$-semigroups.
\end{theorem}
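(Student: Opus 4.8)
The plan is to upgrade Theorem~\ref{th: classifying_space} from a fixed base inverse semigroup $S$ to one in which $S$ is allowed to vary, the crucial point being that for a global support action the base is recoverable from the $\ast$-semigroup. Concretely, as noted before the statement, when $(S,X,p)$ has global support the projection $\pi_X \colon S \ast X \to S$ is a surjective \'etale homomorphism with kernel $\gamma$ (Proposition~\ref{prop: action_to_map}), so $S \cong (S \ast X)/\gamma$; thus the base inverse semigroup carries no information beyond what is already encoded in the right generalized inverse $\ast$-semigroup $S \ast X$. This is exactly what permits us to drop the phrase ``over $S$'' and work with a plain category of $\ast$-semigroups.

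I would define two functors. In one direction, send a global support action $(S,X,p)$ to the $\ast$-semigroup $S \ast X$ of Proposition~\ref{prop: action_to_map}, and a morphism $(\alpha,\beta) \colon (S,X,p) \to (T,Y,q)$ to the map $(s,x) \mapsto (\alpha(s),\beta(x))$. In the other direction, send a right generalized inverse $\ast$-semigroup $T$ to the action $(T/\gamma, E(T), p)$ built from the canonical \'etale map $T \to T/\gamma$ (which is \'etale by Lemma~\ref{le: property} and surjective) as in Proposition~\ref{prop: map_to_action}, and a $\ast$-homomorphism $\alpha \colon T_1 \to T_2$ to the pair $(\alpha_\gamma, \alpha \mid E(T_1))$, where $\alpha_\gamma \colon T_1/\gamma \to T_2/\gamma$ is the induced homomorphism of inverse semigroups (it exists because $\ast$-homomorphisms carry inverses to inverses and hence respect $\gamma$). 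First I check these are well defined on objects: $S \ast X$ is a right generalized inverse $\ast$-semigroup by Proposition~\ref{prop: action_to_map}, and every action of the form $(T/\gamma, E(T), p)$ has global support, since the quotient is a surjective homomorphism of regular semigroups and every idempotent of $T/\gamma$ lifts---indeed if $\gamma(t)$ is idempotent then $t$ is idempotent by the idempotent-purity of Lemma~\ref{le: property}---so that $p$ is onto.

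The substance of the argument lies in the morphism level, which amounts to the base-varying analogues of Proposition~\ref{prop: action_to_map}(4) and Proposition~\ref{prop: map_to_action}(2). For the first direction one verifies that $(s,x)\mapsto(\alpha(s),\beta(x))$ is a well-defined $\ast$-homomorphism: well-definedness uses $\mathbf{d}(\alpha(s))=\alpha(\mathbf{d}(s))=\alpha(p(x))=q(\beta(x))$, and the homomorphism and $\ast$-preservation properties reduce to the single \'etale-action identity $\beta(e\cdot y)=\alpha(e)\cdot\beta(y)$ for idempotents $e$. For the second direction the content is the identity $(\alpha\mid E(T_1))(s\cdot e)=\alpha_\gamma(s)\cdot(\alpha\mid E(T_1))(e)$, and I expect this to be the main obstacle, since unlike Proposition~\ref{prop: map_to_action}(2) the two actions now sit over \emph{different} inverse semigroups $T_1/\gamma$ and $T_2/\gamma$. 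The proof follows the same template: writing $s\cdot e=aa^{\ast}$ with $a^{\ast}a\le e$ and $\theta_1(a)=s\,\theta_1(e)$, one checks that $\alpha(a)$ satisfies the two defining conditions for $\alpha_\gamma(s)\cdot\alpha(e)$, using the relation $\theta_2\alpha=\alpha_\gamma\theta_1$ together with the facts that $\alpha$ preserves $\ast$ and the natural order, and then invokes the uniqueness of the coordinatizing element guaranteed by \'etaleness.

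Finally I would assemble the equivalence from Proposition~\ref{prop: iteration}. Part (2) gives a natural isomorphism $T\cong (T/\gamma)\ast E(T)$, so the round trip starting from a $\ast$-semigroup is naturally isomorphic to the identity; and part (1) gives that the round trip starting from a global support action returns $(S,X,p)$ up to isomorphism, with $S$ recovered precisely as $(S\ast X)/\gamma$ thanks to global support. Checking that these isomorphisms are natural with respect to the morphisms described above is routine, and completes the proof that the two functors constitute an equivalence of categories.
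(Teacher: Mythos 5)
Your proposal is correct and takes essentially the same route as the paper: the same two functors (an action $(S,X,p)$ goes to $S \ast X$ with morphisms $(s,x)\mapsto(\alpha(s),\beta(x))$, and a $\ast$-semigroup $T$ goes to $(T/\gamma, E(T), p_T)$ with the induced pair of maps), with the equivalence then deduced from Proposition~\ref{prop: iteration}. The paper's own proof is terser, leaving the morphism-level verifications and the recovery of the base as $(S\ast X)/\gamma$ as routine checks, which you correctly identify and fill in.
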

\begin{proof} We first define two functors.

Let $\theta \colon S \rightarrow T$ be a homomorphism between right generalized inverse $\ast$-semigroups.
Observe that $s \gamma t$ implies that $\theta (s) \gamma \theta (t)$.
We may therefore define a homomorphism $\theta_{1} \colon S/\gamma \rightarrow T/\gamma$.
There is also a homomorphism $\theta_{2} \colon E(S) \rightarrow E(T)$.
Let $(S/\gamma, E(S), p_{S})$ and $(T/\gamma, E(T),p_{T})$ be \'etale actions with global support associated with $S$ and $T$ respectively.
The fact that $(\theta_{1},\theta_{2})$ is a morphism of \'etale sets follows readly from the definition
of the actions and the fact that $\theta$ is a $\ast$-homomorphism.

Now let $(\alpha, \beta) \colon (S,X,p) \rightarrow (T,Y,q)$ be a morphism of \'etale actions with global support.
Define $\theta \colon S \ast X \rightarrow T \ast Y$ by $\theta (s,x) = (\alpha (s),\beta (x))$.
It is routine to check that this is a well-defined map and a $\ast$-homomorphism.

The fact that these two functors yield an equivalence of categories essentially follows by Proposition~\ref{prop: iteration}.
\end{proof}

Let $T$ be a generalized inverse semigroup.
We say that $T$ is {\em over the inverse semigroup $T/\gamma$.}
The proof of the following is now immediate.

\begin{corollary} Let $S$ be an inverse semigroup.
Then the category of \'etale actions of $S$ with global support is equivalent to the category
of right generealized inverse $\ast$-semigroups over $S$.
\end{corollary}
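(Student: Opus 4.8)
The plan is to obtain this corollary as the ``fibre over $S$'' of the equivalence already established in Theorem~\ref{the: global_support}. That theorem produces functors $F$, sending an \'etale action with global support $(S',X,p)$ to the $\ast$-semigroup $S'\ast X$, and $G$, sending a right generalized inverse $\ast$-semigroup $T$ to the action $(T/\gamma,E(T),p)$, and these are mutually quasi-inverse. My first step is to record that each side carries a projection onto inverse semigroups which $F$ and $G$ respect: let $\Pi_{\mathrm{act}}$ send $(S',X,p)$ to $S'$ and a morphism $(\alpha,\beta)$ to $\alpha$, and let $\Pi_{\mathrm{sgp}}$ send $T$ to $T/\gamma$ and a $\ast$-homomorphism $\theta$ to the induced map $\theta_{1}\colon T_{1}/\gamma\to T_{2}/\gamma$ (well defined since $s\,\gamma\,t$ forces $\theta(s)\,\gamma\,\theta(t)$, as observed in the proof of Theorem~\ref{the: global_support}).

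The key observation is that the two categories named in the corollary are exactly the fibres of these projections over the fixed object $S$. On the one hand, an \'etale action \emph{of $S$} with global support is an action whose acting semigroup is $S$, and a morphism in $\mathscr{B}(S)$ is precisely a morphism $(\alpha,\beta)$ with $\alpha=\mathrm{id}_{S}$; thus this category is $\Pi_{\mathrm{act}}^{-1}(S)$. On the other hand, ``$T$ is over $S$'' means $T/\gamma=S$, and a morphism over $S$ is a $\ast$-homomorphism inducing the identity on $S$; thus the category of right generalized inverse $\ast$-semigroups over $S$ is $\Pi_{\mathrm{sgp}}^{-1}(S)$.

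Next I would verify that $F$ and $G$ restrict to these fibres. For $F$ this rests on Proposition~\ref{prop: action_to_map}: the projection $\pi_{X}$ is surjective with kernel $\gamma$, so $(S'\ast X)/\gamma$ is canonically $S'$ and hence $\Pi_{\mathrm{sgp}}\circ F=\Pi_{\mathrm{act}}$; moreover when $\alpha=\mathrm{id}_{S}$ the homomorphism $F(\alpha,\beta)\colon(s,x)\mapsto(s,\beta(x))$ induces the identity on $S$ modulo $\gamma$, so $F$ maps $\Pi_{\mathrm{act}}^{-1}(S)$ into $\Pi_{\mathrm{sgp}}^{-1}(S)$. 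For $G$ the acting semigroup of $G(T)$ is $T/\gamma$ by construction, so $\Pi_{\mathrm{act}}\circ G=\Pi_{\mathrm{sgp}}$, and a $\ast$-homomorphism over $S$ has $G(\theta)=(\mathrm{id}_{S},\theta_{2})$, a morphism of $S$-actions; so $G$ maps $\Pi_{\mathrm{sgp}}^{-1}(S)$ into $\Pi_{\mathrm{act}}^{-1}(S)$.

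Finally I would restrict the natural isomorphisms of Proposition~\ref{prop: iteration} to the fibres. Part~(1) gives an isomorphism $(S,X,p)\cong GF(S,X,p)$ whose $\alpha$-component is $\mathrm{id}_{S}$, and part~(2), applied with the quotient map $T\to T/\gamma=S$ (\'etale by Lemma~\ref{le: property}(4)), gives an isomorphism $T\cong FG(T)$, namely $t\mapsto(\theta(t),t^{\ast}t)$, which induces the identity on $T/\gamma$; both are therefore morphisms over $S$. Hence these natural transformations live in the respective fibres and exhibit the restricted $F$ and $G$ as an equivalence of $\Pi_{\mathrm{act}}^{-1}(S)$ with $\Pi_{\mathrm{sgp}}^{-1}(S)$, which is the corollary. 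I expect the only real work, and the main obstacle, to be this bookkeeping of morphisms: confirming that ``morphism over $S$'' on the semigroup side corresponds under $F$ and $G$ to ``$\alpha=\mathrm{id}_{S}$'' on the action side; once that correspondence is checked, the statement is a direct restriction of Theorem~\ref{the: global_support}.
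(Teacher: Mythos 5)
Your proposal is correct and takes essentially the same approach as the paper: the paper obtains the corollary as an immediate restriction of Theorem~\ref{the: global_support} to a fixed inverse semigroup $S$, which is exactly your fibre argument. Your additional bookkeeping (checking that the functors and the natural isomorphisms of Proposition~\ref{prop: iteration} respect the fibres over $S$, up to the canonical identification $(S\ast X)/\gamma \cong S$ coming from $\pi_{X}$) simply spells out what the paper leaves implicit.
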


We now describe an example that provided intuition for our construction.
Let $H$ be a group that acts on a set $X$ on the left. 
Then $G = H \times X$ can be endowed with the structure of a groupoid in a construction that goes back to Ehresmann.
The question arises of how we might capture the action purely algebraically.
We can regard $X$ as a right zero semigroup and then $G$ becomes a right group:  
that is, a direct product of a group and a right zero semigroup. 
This leads to a forgetful functor from left actions of $G$ to right groups with base group $G$. 
If $(h,x)\in G$ then its domain is $x$ via the identification of $(x, id)$ with $x$, and its range is $hx$. 
However, when we pass from the groupoid $G$ to the right group $G$ we lose information about ranges. 
A way to record this information in $G$ is to preserve the inversion of the groupoid by considering $G$ as a $*$-semigroup 
by defining $(h,x)^*=(h,x)^{-1}=(h^{-1}, hx)$. 
Then the range of $(h,x)$ is recorded as the domain of  $(x,h)^*$. 
The axioms of $*$-semigroup given in \cite{F} are enough to prove that left actions of $H$ are the same thing 
as right groups with base groups $G$ that have the structure of $*$-semigroups.
In our theorem above, we are essentially replacing the group by an inductive groupoid.


\end{document}